\numberwithin{equation}{section}
\newtheorem{theorem}{Theorem}[section]
\newtheorem{assumption}{Assumption}[section]
\newtheorem{corollary}{Corollary}[section]
\newtheorem{proposition}{Proposition}[section]
\newtheorem{definition}{Definition}[section]
\newcommand{\mcr}{{\mathcal R}}
\newcommand{\mcd}{{\mathcal D}}
\DeclareMathOperator*{\argmin}{arg\,min}
\newcommand{\R}{\mathbb{R}}
\newcommand{\xa}{x_\alpha}
\newcommand{\xad}{{x_\alpha^\delta}}
\newcommand{\xd}{x^\dagger}
\newcommand{\yd}{y^\delta}
\begin{document}

\title{The Kurdyka-\L{}ojasiewicz inequality as regularity condition}

\author{Daniel Gerth$^\ast$, Stefan Kindermann$^\dag$\\$^\ast$Faculty of Mathematics, Chemnitz University of Technology,\\ 09107 Chemnitz, Germany\\ \texttt{daniel.gerth@mathematik.tu-chemnitz.de}\\
$^\dag$Industrial Mathematics Institute,\\ Johannes Kepler University Linz, 4040 Linz, Austria,\\ \texttt{kindermann@indmath.uni-linz.ac.at}
% \texttt{http://users/\homedir iekeland/web/welcome.html}
}

\maketitle              % typeset the title of the contribution

\begin{abstract}
We show that a Kurdyka-\L{}ojasiewicz (KL)
inequality can be used as regularity condition
for  Tikhonov regularization with linear operators in Banach spaces.
In fact, we prove the equivalence of a KL inequality 
and various known regularity conditions 
(variational inequality, rate conditions, and others) that 
are utilized  for postulating smoothness conditions to obtain convergence rates. Case examples of rate estimates for Tikhonov regularization with source conditions 
or with conditional stability estimate illustrate 
the theoretical result.
\end{abstract}

\section{Introduction}
In the theory of the 
regularization of ill-posed inverse problems, it is well-known that the behavior of regularization methods
essentially depends on the interplay of the forward operator with the true solution. 
Over time, several conditions have been developed that, usually formulated as assumptions,
allow for a more or less precise description of the regularization process. In this paper,
we will connect the set of smoothness conditions discussed in
the recent paper \cite{HKM19} to a Kurdyka-\L{}ojasiewicz (KL) inequality. 
The KL inequality, which we introduce in detail in Section \ref{sec:KL}, has been 
utilized in various branches of mathematics since its discovery in the 1960's. 
Hence, it may open new perspectives to inverse problems. 

Before going into detail, we introduce 
the setting of our paper. We consider operator equations
\begin{equation}\label{eq:problem}
Ax=y
\end{equation}
where $A$ is a bounded linear operator mapping from an infinite-dimensional Banach space $X$ to an
infinite-dimensional Hilbert space $H$. We assume that the range $\mcr(A)$ of $A$ is not closed in
$H$, $\mcr(A)\neq \overline{\mcr(A)}$, such that $A$ is not continuously invertible and hence \eqref{eq:problem} is ill-posed.  
We assume that only noisy data $y^\delta$ is available with $||y-y^\delta||\leq\delta$ for $\delta>0$. Due to the 
ill-posedness of \eqref{eq:problem} and the noisy data, we employ the Tikhonov-type regularization
\begin{equation}\label{eq:functional}
T_\alpha^\delta(x)=\frac{1}{2}||Ax-y^\delta||^2+\alpha J(x)
\end{equation}
to determine a stable approximation to the true solution $x^\dag$ for which $Ax^\dag=y$ holds. In \eqref{eq:functional}, $\alpha>0$ is the regularization parameter and $J:\mcd(J)\subset X\rightarrow\R$ the penalty functional. The minimizer of \eqref{eq:functional} is the regularized solution, i.e.,
\begin{equation}
x_\alpha^\delta=\argmin_{x\in\mcd(A)}T_\alpha^\delta(x).
\end{equation}
By omitting the superscript $\delta$, we denote noise-free data
and variables, i.e.,
\begin{equation}\label{eq:functional_nonoise}
x_\alpha=\argmin_{x\in\mcd(A)}T_\alpha \qquad\text{with} \quad   T_\alpha(x)=\frac{1}{2}||Ax-y||^2+\alpha J(x).
\end{equation}

In order to guarantee existence and stability of the approximations $x_\alpha^\delta$ and $x_\alpha$, respectively, 
we impose the following standard assumptions (see, e.g., \cite{HKM19,buch_bspacereg}) on the penalty functional $J$
throughout the paper:
\begin{assumption}\label{ass:penalty}
The functional $J:X\rightarrow[0,\infty]$ is a proper, convex functional defined on a Banach space $X$, 
which is lower semicontinuous with respect to weak (or weak*) sequential 
convergence. Additionally, we assume that $J$ is a stabilizing (weakly coercive) functional,
i.e., the sublevel sets $[J\leq c]:=\{x\in X:J(x)\leq c\}$ of $J$ are, for all $c\geq 0$, 
weakly (or weakly*) sequentially compact. 
Moreover, we assume that at least one solution $x^\dag$ of \eqref{eq:problem} with finite penalty value $J(x^\dag)<\infty$ exists
and that the subgradient $\partial J(\xd)$ exists. 
\end{assumption} 

With the basic regularization properties covered as consequence of Assumption \ref{ass:penalty}, we move directly to the discussion of convergence rates. In Banach space regularization, the Bregman distance
\[
B_\xi(z,x):=J(x)-J(z)-\langle \xi,x-z\rangle\geq0,\quad x\in X,\quad \xi\in\partial J(z)\subset X^\ast,
\] 
where the subgradient $\xi$ is an element of the subdifferential $\partial J(z)$ of $J$ in the point $z\in X$, has become a popular choice to measure the speed of convergence of the approximate solution to the true solution $x^\dag$. In this paper, we follow the approach of \cite{HKM19} and consider the Bregman distance
\[
B_{\xi_\alpha^\delta}(x_\alpha^\delta,x^\dag)
\]
with subgradient taken at the approximate solutions. Note that the Bregman distance is not symmetric in its arguments. Our task is to find an index function $\varphi$, i.e., a monotonically increasing function $\varphi:[0,\infty)\rightarrow[0,\infty)$
with $\varphi(0) = 0$ that is  continuous (possibly only in a neighborhood of $0$),
such that
\begin{equation}\label{eq:rate_prototype}
B_{\xi_\alpha^\delta}(x_\alpha^\delta,x^\dag)\leq \varphi(\delta).
\end{equation}
It is well-known that no uniform 
function $\varphi$ exists {\em for all} $x^\dag\in X$, and that $\varphi$ 
has to take into account 
the interplay between 
the operator $A$, the solution $x^\dag$, and the 
penalty functional $J$, in combination with an appropriate choice of the regularization 
parameter $\alpha>0$ in \eqref{eq:functional} and \eqref{eq:functional_nonoise}, respectively. Many conditions have been
developed that control this interplay and yield convergence rates \eqref{eq:rate_prototype}.
It is the aim of this paper to show the equivalence of most of the known conditions, and more important, 
we add another equivalent condition in form of the KL-inequality. 

%not the scope of this paper
%to name all of them and show their interconnections, we will focus on the three concepts as discussed in \cite{HKM19}. 
%Namely, there the following smoothness assumptions are considered.

\section{Convergence rate theory for convex Tikhonov regularization} 
% Equivalence theorem 

% Convergence rate theorem in alpha 

%  Convergence rate in delta 

For the complete statement 
of our equivalence results, we also need 
Flemming's   distance function \cite{Flemmex,Flemmdiss}:
\[ D(r) := \sup_{x \in X} \left( J(\xd) - J(x) - r \|A x - A \xd\|\right). \]
%We define an index function as sublinear if $\Phi(\alpha)/\alpha$ is nonincreasing 
%cf. \cite{HKM}.

\begin{theorem}\label{th1}
The following statements are equivalent: 
\begin{itemize}
\item[(a)] ($J$-rate) There is an index function $\Psi_1$ such that
\begin{equation}\label{eq:ass_penaltydiff}
J(x^\dag)-J(x_\alpha)\leq \Psi_1(\alpha)\quad \mathrm{for\ all} \quad\alpha>0.
\end{equation}
\item[(b)] ($T$-rate) There is an index function $\Psi_2$ such that
\begin{equation}\label{eq:ass_tikhdiff}
\frac{1}{\alpha}\left(T_\alpha(x^\dag)-T_\alpha(x_\alpha)\right)\leq \Psi_2(\alpha)\quad \mathrm{for\ all} \quad\alpha>0.
\end{equation}
\item[(c)] (Variational inequality) There is an index function $\Phi_3$ %such that $s^2/\Phi(s)$ is an index function as well {\bf todo} 
such that
\begin{equation}\label{eq:ass_varinequality}
J(x^\dag)-J(x)\leq \Phi_3(\|Ax^\dag-Ax\|)\qquad \mathrm{for\ all} \quad x\in X.
\end{equation}
 \item[(d)] (Distance function) There is an index function $\Psi_4$ such that 
 \begin{equation} 
 D(\tfrac{1}{r}) \leq \Psi_4(r) \qquad \forall r >0.
 \end{equation}
 \item[(e)] (Dual $T$-rate) There exists 
 an index function $\Psi_5$ such that for all $\alpha>0$ a $z \in Y$ exist with $x^* = \partial J(\xd)$
 and 
 \begin{equation} 
% If for all $\alpha$ a $z \in L^2$ exists, such that with $x^* = \partial J(\xd)$
 %\[
 J^*( A^*z) -J(x^*)  - (\xd,  A^* z-x^*)_{X,X^*}   + \alpha  \frac{1}{2}  \|z\|^2 \leq \Psi_5(\alpha), 
 \end{equation} 
\item[(f)] (KL-inequality) There exists a concave index function $\varphi$ such that \\$(\partial\varphi)^{-1}(z) z$ is nonincreasing with 
$\lim_{z\to \infty} (\partial \varphi)^{-1}(z) z = 0$, with 
\begin{equation}\label{eq:ass_klinit}
\left\|\partial\left(\varphi\circ \left(T_\alpha(x^\dag)-T_\alpha(x_\alpha)\right)\right)\right\|\geq \frac{1}{k}.
\end{equation}
\end{itemize}
\end{theorem}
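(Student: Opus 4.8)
The plan is to route every statement through a single auxiliary object, the \emph{value gap} $F(\alpha) := \Tik(\xd)-\Tik(\xa)$, and to prove all six conditions equivalent to a rate for $F$. First I would record the structural properties of $F$: since $A\xd=y$ one has $\Tik(\xd)=\alpha J(\xd)$, which is affine in $\alpha$, while $\alpha\mapsto \Tik(\xa)=\min_x \Tik(x)$ is concave, being a pointwise infimum of the functions $\alpha\mapsto\tfrac12\|Ax-y\|^2+\alpha J(x)$ that are affine in $\alpha$. Hence $F$ is convex, nonnegative, nondecreasing, with $F(0)=0$ and one-sided derivative $F'(\alpha)=J(\xd)-J(\xa)$. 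I would also note the variational representation $F(\alpha)=\sup_{x}\bigl(\alpha(J(\xd)-J(x))-\tfrac12\|Ax-A\xd\|^2\bigr)$, which links $F$ directly to the penalty gap along the range of $A$.

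With this in hand the equivalence (a)$\Leftrightarrow$(b) is immediate from convexity and $F(0)=0$: the secant inequalities $\tfrac1\alpha F(\alpha)\le F'(\alpha)\le \tfrac{F(2\alpha)}{\alpha}=2\cdot\tfrac{1}{2\alpha}F(2\alpha)$ let me pass from the $J$-rate ($F'(\alpha)\le\Psi_1(\alpha)$) to the $T$-rate ($\tfrac1\alpha F(\alpha)\le\Psi_2(\alpha)$) and back, with $\Psi_2=\Psi_1$ in one direction and $\Psi_1(\alpha)=2\Psi_2(2\alpha)$ in the other, both index functions. For (b)$\Leftrightarrow$(e) I would compute the Fenchel--Rockafellar dual of $\min_x\Tik(x)$: using $G(u)=\tfrac12\|u-y\|^2$ with $G^*(z)=\tfrac12\|z\|^2+\langle z,y\rangle$ and $(\alpha J)^*=\alpha J^*(\cdot/\alpha)$, strong duality gives $\tfrac1\alpha F(\alpha)=J(\xd)+\inf_{z}\bigl(J^*(A^*z)-\langle z,y\rangle+\tfrac\alpha2\|z\|^2\bigr)$; substituting the Fenchel identity $\langle x^*,\xd\rangle=J^*(x^*)+J(\xd)$ for $x^*\in\partial J(\xd)$ turns the bracket into exactly the functional of (e). Thus (e) is nothing but (b) read on the dual side, the existential $z$ being the dual minimizer, whose existence follows from coercivity of $\tfrac\alpha2\|\cdot\|^2$.

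For (b)$\Leftrightarrow$(c)$\Leftrightarrow$(d) I would work with the single scalar profile $g(s):=\sup\{J(\xd)-J(x):\|Ax-A\xd\|\le s\}$, a nondecreasing function. Then (c) reads $g(s)\le\Phi_3(s)$, the distance function is the concave Legendre transform $D(\tfrac1r)=\sup_{s\ge0}\bigl(g(s)-\tfrac{s}{r}\bigr)$, and $\tfrac1\alpha F(\alpha)=\sup_{s\ge0}\bigl(g(s)-\tfrac{s^2}{2\alpha}\bigr)$. All three quantities are transforms of the same $g$, with linear penalisation of $s$ for (c)/(d) and quadratic for (b); their equivalence as index-function bounds is the known interchange between linear and quadratic penalisation, which I would invoke from Flemming's work and \cite{HKM19}. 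The only care needed is that the relevant infima, for instance $\Phi_3(s)=\inf_{r>0}(\Psi_4(r)+s/r)$ and $g(s)\le\inf_\alpha(\Psi_2(\alpha)+s^2/(2\alpha))$, again define genuine index functions.

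The main obstacle is the KL-equivalence (b)$\Leftrightarrow$(f), which is also where the novelty sits. Here I would exploit that, along $t=F(\alpha)$, the chain rule gives $\partial\bigl(\varphi\circ F\bigr)(\alpha)=\varphi'(F(\alpha))\,F'(\alpha)$, so the KL inequality \eqref{eq:ass_klinit} becomes the differential relation $\varphi'(F(\alpha))F'(\alpha)\ge\tfrac1k$. I would construct the desingularising function by forcing equality, $\varphi'(t)=\bigl(kF'(F^{-1}(t))\bigr)^{-1}$; convexity of $F$ makes $F'\circ F^{-1}$ nondecreasing, hence $\varphi'$ nonincreasing and $\varphi$ a concave index function. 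The key computation is that, writing $z=\varphi'(F(\alpha))=\bigl(kF'(\alpha)\bigr)^{-1}$, one obtains $(\partial\varphi)^{-1}(z)\,z=\tfrac{F(\alpha)}{kF'(\alpha)}$, which by the convexity bound $F(\alpha)/F'(\alpha)\le\alpha$ is controlled by the $T$-rate quantity $F(\alpha)/\alpha$; the monotonicity and vanishing conditions on $\varphi$ then reflect that $F(\alpha)/\alpha$ is an index function, and running the identities backwards recovers (b) from any admissible $\varphi$ in (f). The delicate points I expect to fight are the regularity of $F$ needed to invert it and to apply the chain rule for the concave outer function (differentiability almost everywhere, strict monotonicity where $F'>0$, and treating flat pieces of $F'$ through subdifferentials), together with verifying that the constant $1/k$ and the nonincreasing-product requirement can be met simultaneously without destroying the index-function character of the resulting rate.
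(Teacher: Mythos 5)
Your handling of the first five conditions is essentially sound: the convexity of the value gap $F(\alpha)=\Tik(\xd)-\Tik(\xa)$ (infimum of affine functions, $F(0)=0$, $J(\xd)-J(\xa)\in\partial F(\alpha)$) gives (a)$\Leftrightarrow$(b) by secant inequalities, which is a clean elementary substitute for the paper's citation of \cite[Prop.~2.4]{HKM19}; your Fenchel--Rockafellar computation for (b)$\Leftrightarrow$(e) is exactly the paper's duality argument; and the profile-function view of (b),(c),(d) matches the paper's combination of a direct optimality argument with Flemming's results. The problem is the one equivalence that carries the novelty of the theorem, (b)$\Leftrightarrow$(f).

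Condition (f) is not a differential inequality in $\alpha$. In \eqref{eq:ass_klinit} the subdifferential is taken with respect to $x$: it is the Kurdyka--\L{}ojasiewicz inequality for the functional $x\mapsto T_\alpha(x)$ (the specialization of \eqref{eq6} to $x=\xd$), namely
\begin{equation*}
\varphi'\bigl(T_\alpha(\xd)-T_\alpha(\xa)\bigr)\,\mathrm{dist}\bigl(0,\partial_x T_\alpha(\xd)\bigr)\geq \frac{1}{k},
\qquad
\partial_x T_\alpha(\xd)=A^*(A\xd-y)+\alpha\,\partial J(\xd)=\alpha\,\partial J(\xd),
\end{equation*}
where the last identity uses $A\xd=y$. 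The whole point of the paper's Theorem~\ref{th:main} is that the ``gradient'' factor is therefore $\alpha\|\partial J(\xd)\|_{-}$, a \emph{known linear function of} $\alpha$; since $\varphi'$ is decreasing, the inequality can be inverted to the upper bound $F(\alpha)\leq(\varphi')^{-1}\bigl(1/(k\alpha\|\partial J(\xd)\|_{-})\bigr)$, which is (b) with $\Psi(\alpha)=\tfrac1\alpha(\varphi')^{-1}\bigl(1/(k\alpha\|\partial J(\xd)\|_{-})\bigr)$, and conversely (b) yields (f) by setting $\partial\varphi:=\bar\Theta^{-1}$ with $\bar\Theta(\beta)=\Psi(1/\beta)/\beta$ and using $\|\partial_x T_\alpha(\xd)\|_{-}\sim\alpha$. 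You instead read $\partial$ as $d/d\alpha$, replacing this factor by $F'(\alpha)=J(\xd)-J(\xa)$. That changes the statement, and fatally so: your inequality $\varphi'(F(\alpha))F'(\alpha)\geq 1/k$ is self-referential and integrates (using $F(0)=0$, $\varphi(0)=0$) to $\varphi(F(\alpha))\geq\alpha/k$, i.e.\ a \emph{lower} bound $F(\alpha)\geq\varphi^{-1}(\alpha/k)$, which can never be equivalent to the upper-bound condition (b). Concretely, for a gap behaving like $F(\alpha)=c\alpha$ (permitted by all structural properties of $F$ you use), condition (b) fails, since $F(\alpha)/\alpha\equiv c$ admits no index-function majorant vanishing at $0$, yet your version of (f) holds with $\varphi'(t)=\max\bigl(1/(ck),\,\tfrac12 t^{-1/2}\bigr)$, which is nonincreasing and satisfies $(\varphi')^{-1}(z)z=1/(4z)\to0$. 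So your ``(f)$\Rightarrow$(b)'' direction has nothing to invert --- $F'$ is unknown and tied to $F$ --- and no rate can be extracted. The admitted ambiguity of the notation in \eqref{eq:ass_klinit} explains the misreading, but only the $x$-subdifferential reading makes the equivalence true, and it is also what produces the factor $\|\partial J(\xd)\|_{-}$ appearing in the paper's conversion formula, a quantity entirely absent from your argument.
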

\begin{proof} 
In the proof we provide the formula for converting the various index functions: 
In \cite[Prop. 2.4]{HKM19} the equivalence of (a) and (b) was shown:
\[ \text{(a)} \Rightarrow \text{(b)}: \Psi_2 \leq \Psi_1 \qquad 
\text{(b)} \Rightarrow \text{(a)}: \Psi_1 \leq 2 \Psi_2.\] 
Also in \cite[Prop 3.3]{HKM19} it was shown that 
\[  \text{(c)} \Rightarrow \text{(b)}: \Psi_2(\alpha) \leq \sup_{t>0}\left( \Phi_3(t) - \frac{t^2}{2 \alpha}\right). \] 
It follows that $\Psi_2$ is increasing and by continuity of $\Phi_3$, it can be shown that $\Phi_2(0) = 0$.  
We now show (b) $\Rightarrow$ (c):  
From  \eqref{eq:ass_tikhdiff}, it follows, for all $x$ and all $\alpha$,
\[ J(\xd) - J(\xa) - \frac{1}{2\alpha} \|A \xa- A \xd\|^2 \leq 
 \frac{1}{\alpha} T_\alpha(\xd,A \xd)  -\frac{1}{\alpha} T_\alpha(\xa,A \xd) \leq \Psi_2(\alpha).  \] 
Thus, from the optimality  of $\xa$, we find 
\[ J(\xd) - J(x)  \leq \Psi_2(\alpha) + \frac{1}{2\alpha} \|A x- A \xd\|^2. \]  
Taking the infimum over $\alpha$ yields the variational inequality  \eqref{eq:ass_varinequality}
with the function $\Phi_3$ 
\[  \text{(b)} \Rightarrow \text{(c)}: \Phi_3(\alpha) = \inf_{t>0} \left(\Psi_2(t) + \frac{\alpha^2}{2 t} \right).  \]
If follows easily that $\Psi$ is an index function.

Moreover, (d) $\Leftrightarrow$  (c) by results of Flemming \cite[Lemma 3.4]{Flemmex} \cite[Thm.~12.32]{Flemmdiss}, with 
\[  \text{(d)} \Rightarrow \text{(c)}: \Phi_3(\alpha)  = \inf_{r>0} \left( \Phi_4(r) + r  \alpha \right) \]
and 
\[  \text{(c)} \Rightarrow \text{(d)}: \Phi_4(\alpha)  = \inf_{r>0} \left( \Phi_3(t) -   \alpha t \right). \]
%the case (a,b,c) implies (d) is {\bf open}.  

Concerning (f), we remark that by duality we may 
rewrite the Tikhonov functional as 
\begin{align*}
&\tfrac{1}{\alpha}\left(T_\alpha(x^\dag)-T_\alpha(x_\alpha)\right) = 
J(x^\dag) - \tfrac{1}{\alpha}T_\alpha(x_\alpha) \\
&= 
J(x^\dag) -\tfrac{1}{\alpha} \sup_{p}\left[ -\tfrac{1}{2}\|p\|^2  + (p,A \xd)   -\alpha J^*\left(\frac{1}{\alpha} A^* p\right)\right] \\
& =  \inf_{p}\left[ J(x^\dag)+ J^*\left(\frac{1}{\alpha} A^* p\right) - \tfrac{1}{\alpha} (p,A \xd)  + \tfrac{1}{2 \alpha }\|p\|^2\right].
\end{align*}
Young's inequality yields  $J(x^\dag) = (\xd,x^*) -J^*(x^*)$, and by setting $z  = \frac{1}{\alpha} p$ 
it is clear that (f) is just a reformulation of (b): (Note that the infimum over $p$ is attained). 
\[ \text{(b)} \Leftrightarrow \text{(f)}:   \Psi_5 = \Psi_2. \]
Similar formulas were actually already used by Flemming \cite{Flemmdiss}.

The essential equivalence of the 
KL inequality (g) is one of the main issues in this paper and will be shown in later sections in Theorem~\ref{th:main}.
\end{proof} 
% In fact, a main contribution of \cite{HKM19} was to prove that all three conditions are
% (up to constants) equivalent. It was shown that Assumption \ref{ass:smoothness_hkm}
% (a) is stronger than (b), however, inequality (b) implies (a) with $\Psi$ replaced by $2\Psi$. 
% If (c) holds, then so do (a) and (b) with $\Psi(\alpha)=\sup_{t>0}\left[ \Phi(t)-\frac{t^2}{2\alpha}\right]$.
% \footnote{Is the implication (a) $\Rightarrow$ (c) given explicitly somewhere?}

Hence, any of the conditions in Theorem~\ref{th1} implies the other ones. 
These conditions imply a certain decay rate for the 
approximation error in the Bregman distance.  This subsequently yields 
convergence rate for the total error measured in the Bregman distance. 
Not only this, but we immediately obtain errors in the strict metric and a 
Tikhonov rate (These results were obtained or follow easily from 
\cite[Thm.~2.8, Prop. 3.7] {HKM19}):

\begin{theorem}
Let any of the equivalent assumptions in Theorem~\ref{th1} hold. 
Then, for all $\alpha >0$, 
\begin{enumerate}
\item (Bregman rate)
there is a constant  $C$ such that 
\begin{equation}\label{eq:bregman_decomposition}
B_{\xi_\alpha^\delta}(x_\alpha^\delta,x^\dag)\leq C \inf_\alpha \left( \frac{\delta^2}{\alpha}+\Psi_2(\alpha) \right);
\end{equation}
\item (strict metric rate) 
there is a constant  $C$ such that 
such that for all $\alpha>0$
\begin{equation}\label{strict metric} 
\begin{split} 
 \left|J(\xd) - J(\xad)\right| & \leq  C\left(\Psi_2(\alpha) +  \frac{\delta^2}{\alpha} \right), \\
 \|A \xad - A \xd\|^2 &\leq C
\left( \alpha \Psi_2(\alpha) + \delta^2 \right);
 \end{split} 
%\left. 
%\begin{array}{c} |J(\xd) - J(\xad)|, \\[1ex] 
%\frac{1}{\alpha} \|A \xad - \yd\|^2 \end{array} \right\} 
%\leq C \inf_\alpha \left(\Psi(\alpha) +  \frac{\delta^2}{\alpha} \right)  
\end{equation} 
\item  (Tikhonov rate) there is a constant $C$ such that 
\begin{equation}\label{Tikhonov rate} 
| J(\xd) - \frac{1}{\alpha} T_{\alpha}^\delta(\xad) | \leq C  \left(\Psi_2(\alpha) +  \frac{\delta^2}{\alpha} \right). 
\end{equation} 
\end{enumerate} 
\end{theorem}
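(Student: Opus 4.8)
The plan is to transport the noise-free $T$-rate of Theorem~\ref{th1}(b), together with the variational inequality (c) and the Fenchel-type conjugacy between $\Phi_3$ and $\Psi_2$ recorded in its proof, into the noisy regime by the standard minimality argument. First I would exploit that $\xad$ minimises $T_\alpha^\delta$: comparing its value with that at $\xd$ and using $\|A\xd - \yd\| = \|y-\yd\| \le \delta$, the cross terms combine to give the fundamental inequality $\tfrac12\|A\xad - A\xd\|^2 + \alpha\bigl(J(\xad)-J(\xd)\bigr) \le \langle A\xad - A\xd,\, \yd - y\rangle \le \delta\|A\xad - A\xd\|$. Writing $s := A\xad - A\xd$, this reads $\tfrac12\|s\|^2 + \alpha(J(\xad)-J(\xd)) \le \delta\|s\|$, and it already yields the one-sided bound $J(\xad) - J(\xd) \le \tfrac{\delta^2}{2\alpha}$ by completing the square on the right.

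The next, and decisive, step is to extract an explicit bound on the exact-data residual $\|s\|$. Inserting the variational inequality in the form $J(\xad)-J(\xd) \ge -\Phi_3(\|s\|)$ gives $\tfrac12\|s\|^2 - \alpha\Phi_3(\|s\|) \le \delta\|s\|$. I would then invoke the relation $\Phi_3(t) \le \Psi_2(\beta) + \tfrac{t^2}{2\beta}$, valid for every $\beta>0$ and immediate from the formula $\Phi_3(\alpha) = \inf_{t>0}\bigl(\Psi_2(t) + \tfrac{\alpha^2}{2t}\bigr)$ obtained in the proof of Theorem~\ref{th1}, but evaluated at the shifted parameter $\beta = 2\alpha$ rather than $\alpha$, so that the quadratic term it produces carries coefficient $\tfrac14$ and does not cancel the $\tfrac12\|s\|^2$ on the left. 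Combined with Young's inequality applied to $\delta\|s\|$, this leaves a strictly positive multiple of $\|s\|^2$ and yields $\|s\|^2 \le C(\alpha\Psi_2(\alpha) + \delta^2)$, the second strict-metric estimate; feeding this back through the variational inequality and the one-sided bound above produces $|J(\xd)-J(\xad)| \le C(\Psi_2(\alpha) + \tfrac{\delta^2}{\alpha})$.

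For the Bregman rate I would use the optimality condition for $\xad$, which furnishes a subgradient $\xi_\alpha^\delta \in \partial J(\xad)$ with $\alpha\xi_\alpha^\delta = A^*(\yd - A\xad)$. Substituting this into the definition of $B_{\xi_\alpha^\delta}(\xad,\xd)$ converts its linear term into $\tfrac1\alpha\langle \yd - A\xad,\, A\xad - A\xd\rangle$; after splitting $\yd - A\xad = (\yd - y) - s$ one obtains $B_{\xi_\alpha^\delta}(\xad,\xd) = \bigl(J(\xd)-J(\xad)\bigr) - \tfrac1\alpha\|s\|^2 + \tfrac1\alpha\langle \yd - y,\, s\rangle$. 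Bounding $J(\xd)-J(\xad) \le \Phi_3(\|s\|)$, discarding the nonpositive term $-\tfrac1\alpha\|s\|^2$, and controlling $\tfrac1\alpha\langle\yd-y,s\rangle \le \tfrac{\delta\|s\|}{\alpha}$ by Young, the residual bound of the previous paragraph collapses everything to $B_{\xi_\alpha^\delta}(\xad,\xd) \le C(\Psi_2(\alpha) + \tfrac{\delta^2}{\alpha})$ for every $\alpha$; taking the infimum over $\alpha$ gives \eqref{eq:bregman_decomposition}. The Tikhonov rate then follows by writing $\tfrac1\alpha T_\alpha^\delta(\xad) = \tfrac{1}{2\alpha}\|A\xad-\yd\|^2 + J(\xad)$ and reusing the residual and penalty estimates.

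I expect the decisive obstacle to be precisely this residual extraction: the conjugacy between $\Phi_3$ and $\Psi_2$ is tight at the parameter $\alpha$, so using it there would produce exactly the coefficient $\tfrac12$ and cancel the $\tfrac12\|s\|^2$ on the left, giving nothing. One must therefore shift to a larger parameter and then absorb the resulting $\Psi_2(2\alpha)$ into the generic constant $C$, which is harmless for the power-type (and, more generally, doubling) rate functions occurring in practice but should be noted explicitly. The remaining work is routine bookkeeping with Young's inequality and the optimality condition, together with the observation that each composite of the $\Psi_i$ and $\Phi_3$ again remains an index function.
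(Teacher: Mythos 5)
Your individual computations are correct (the fundamental inequality $\tfrac12\|s\|^2+\alpha(J(\xad)-J(\xd))\le\delta\|s\|$, the identity for $B_{\xi_\alpha^\delta}(\xad,\xd)$ via $\alpha\xi_\alpha^\delta=A^*(\yd-A\xad)$, and the conjugate bound $\Phi_3(t)\le\Psi_2(\beta)+t^2/(2\beta)$), but there is a genuine gap at exactly the point you flag and then wave away. The theorem is stated for an \emph{arbitrary} index function $\Psi_2$ --- monotone, vanishing and continuous at $0$ --- with no doubling hypothesis anywhere in the paper, and parts 2 and 3 are pointwise in $\alpha$ with $\Psi_2(\alpha)$ on the right-hand side. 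Your route delivers only $C(\alpha\Psi_2(2\alpha)+\delta^2)$ and $C(\Psi_2(2\alpha)+\delta^2/\alpha)$, and the absorption of $\Psi_2(2\alpha)$ into the constant is impossible in this generality: for $\Psi_2(\alpha)=e^{-1/\alpha}$ one has $\Psi_2(2\alpha)/\Psi_2(\alpha)=e^{1/(2\alpha)}\to\infty$ as $\alpha\to0$. So parts 2 and 3 are not proved as stated. (Part 1 does survive, since the infimum repairs the shift: substituting $\alpha\mapsto\alpha/2$ gives $\inf_\alpha\bigl(\delta^2/\alpha+\Psi_2(2\alpha)\bigr)\le 2\inf_\alpha\bigl(\delta^2/\alpha+\Psi_2(\alpha)\bigr)$; this half-line of justification is missing from your write-up but is easy.) The shift is moreover intrinsic to any argument that reaches $\xad$ only through the variational inequality (c): as you note, (c) at parameter $\alpha$ is exactly annihilated by $\tfrac12\|s\|^2$, and at $\beta=c\alpha$, $c>1$, it can only ever return $\Psi_2(c\alpha)$.

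The paper itself does not argue directly here --- it cites \cite{HKM19} --- and the way to get the sharp form is to route the noisy estimate through the noise-free minimizer $\xa$ at the \emph{same} $\alpha$, where condition (b) of Theorem~\ref{th1} is used without any conjugation. Concretely: subtracting the optimality conditions $A^*(A\xa-y)+\alpha\xi_\alpha=0$ and $A^*(A\xad-\yd)+\alpha\xi_\alpha^\delta=0$ and pairing with $\xad-\xa$, monotonicity of $\partial J$ gives the noise-propagation bounds $\|A\xad-A\xa\|\le\delta$ and $\langle\xi_\alpha^\delta-\xi_\alpha,\xad-\xa\rangle\le\delta^2/(4\alpha)$; on the other hand, Theorem~\ref{th1} (a)$\Leftrightarrow$(b) evaluated at $\alpha$ itself gives the noise-free bounds $J(\xd)-J(\xa)\le2\Psi_2(\alpha)$ and $\tfrac12\|A\xa-y\|^2\le\alpha\bigl(J(\xd)-J(\xa)\bigr)\le2\alpha\Psi_2(\alpha)$. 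Then $\|A\xad-A\xd\|\le\|A\xad-A\xa\|+\|A\xa-y\|$ yields the residual estimate with $\Psi_2(\alpha)$, the exact decomposition $B_{\xi_\alpha^\delta}(\xad,\xd)=B_{\xi_\alpha}(\xa,\xd)+B_{\xi_\alpha^\delta}(\xad,\xa)+\langle\xi_\alpha-\xi_\alpha^\delta,\xd-\xa\rangle$ with cross term at most $\tfrac{2\delta}{\alpha}\|A\xa-y\|\le2\bigl(\delta^2/\alpha+\Psi_2(\alpha)\bigr)$ yields the Bregman and penalty estimates, and the Tikhonov rate follows as in your last step. Either switch to this comparison-with-$\xa$ argument, or state your version honestly with $\Psi_2(2\alpha)$ (equivalently, add a doubling hypothesis on $\Psi_2$).
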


Moreover, defining the companion $\Theta(\alpha)$ as
\begin{equation}\label{eq:companion}
\Theta(\alpha):=\sqrt{\alpha\Psi_2(\alpha)},
\end{equation}
the a-priori choice
%\footnote{Bekommen wir das Discrepanzprinzip noch hin? Das muss gehen, sonst bricht meine Welt zusammen.}
\begin{equation}\label{eq:alphaapri}
\alpha_\ast=\alpha_\ast(\delta):=\left(\Theta^2\right)^{-1}\left(\frac{\delta^2}{2}\right)=\Theta^{-1}\left(\frac{\delta}{\sqrt{2}}\right)
\end{equation}
obtained by equilibrating the error decomposition \eqref{eq:bregman_decomposition}
%\begin{equation}\label{eq:bregman_decomposition}
%B_{\xi_\alpha^\delta}(x_\alpha^\delta,x^\dag)\leq \frac{\delta^2}{\alpha}+\Psi(\alpha) \quad \mathrm{for all} \quad \alpha>0,
%\end{equation}
%see e.g., \cite{Kindermann16}, 
yields the following convergence rate: 
\begin{corollary}\label{col:rates_bregman}
Let any of the equivalent assumptions in Theorem~\ref{th1} hold. Then with the choice \eqref{eq:alphaapri} we 
obtain the convergence rates 
\begin{equation}
B_{\xi_\alpha^\delta}(x_\alpha^\delta,x^\dag)\leq 2\Psi_2\left(\Theta^{-1}\left(\frac{\delta}{\sqrt{2}} \right) \right).
\end{equation}
\end{corollary}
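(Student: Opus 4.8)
The plan is to substitute the a-priori choice \eqref{eq:alphaapri} directly into the pointwise (in $\alpha$) Bregman error decomposition supplied by the preceding theorem and to exploit that this choice equilibrates the two error contributions. In its sharp form (constant $C=1$ together with the factor $\tfrac12$ on the data term), the decomposition underlying \eqref{eq:bregman_decomposition} reads, for every $\alpha>0$,
\begin{equation*}
B_{\xi_\alpha^\delta}(\xad,\xd)\leq \frac{\delta^2}{2\alpha}+\Psi_2(\alpha),
\end{equation*}
so it suffices to evaluate the right-hand side at $\alpha=\alpha_\ast$ and to bound it by the sum of the two equal terms.

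First I would verify that the a-priori choice is well defined. Since $\Psi_2$ is an index function and $\alpha\mapsto\alpha$ is strictly increasing and continuous with value $0$ at $0$, the product $\alpha\Psi_2(\alpha)=\Theta(\alpha)^2$ inherits these properties, and hence so does $\Theta$ itself by \eqref{eq:companion}. Consequently $\Theta$ possesses a continuous, strictly increasing inverse $\Theta^{-1}$ on its range, and $\alpha_\ast=\Theta^{-1}(\delta/\sqrt2)$ is meaningful for all sufficiently small $\delta$.

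Next I would use the defining identity of $\alpha_\ast$. By \eqref{eq:companion} and \eqref{eq:alphaapri} we have $\Theta(\alpha_\ast)^2=\alpha_\ast\Psi_2(\alpha_\ast)=\delta^2/2$, which rearranges into the equilibration identity
\begin{equation*}
\frac{\delta^2}{2\alpha_\ast}=\Psi_2(\alpha_\ast).
\end{equation*}
This states precisely that the data term and the approximation term in the decomposition coincide at $\alpha=\alpha_\ast$. Inserting $\alpha=\alpha_\ast$ into the decomposition and replacing the data term by this identity yields
\begin{equation*}
B_{\xi_\alpha^\delta}(\xad,\xd)\leq \Psi_2(\alpha_\ast)+\Psi_2(\alpha_\ast)=2\,\Psi_2\!\left(\Theta^{-1}\!\left(\frac{\delta}{\sqrt2}\right)\right),
\end{equation*}
which is the asserted rate.

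I expect no genuine obstacle here; the corollary is a direct consequence of equilibration, and the only points needing care are bookkeeping. In particular, one should confirm that $\Theta$ is invertible so that the parameter choice is admissible, and one should track the constants: the factor $2$ in the conclusion is exactly the sum of the two equal error terms, whereas the argument $\delta/\sqrt2$ (rather than $\delta$) inside $\Theta^{-1}$ is dictated by the factor $\tfrac12$ on the data term, which is what forces $\delta^2/(2\alpha_\ast)$ to equal $\Psi_2(\alpha_\ast)$.
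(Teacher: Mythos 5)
Your proof is correct and is essentially the paper's own (implicit) argument: the corollary is presented as an immediate consequence of substituting the equilibrating choice \eqref{eq:alphaapri} into the error decomposition of the preceding theorem, which is exactly what you do, including the check that $\Theta$ is an invertible index function. You also rightly noted that the clean constant $2$ and the argument $\delta/\sqrt{2}$ force the sharp form $B_{\xi_\alpha^\delta}(x_\alpha^\delta,x^\dag)\le \frac{\delta^2}{2\alpha}+\Psi_2(\alpha)$ (i.e.\ $C=1$ with the factor $\tfrac{1}{2}$ on the data term, as inherited from \cite{HKM19}) rather than the version with an unspecified constant $C$ displayed in \eqref{eq:bregman_decomposition}.
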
 
Note that the same rates holds for the analog error measures in 
\eqref{strict metric}  and \eqref{Tikhonov rate}.

% \begin{proposition}\label{prop:1}
% Suppose that \eqref{eq:ass_tikhdiff}  holds with an index 
% function $\Psi$. 
% Then a variational source condition
% \eqref{eq:ass_varinequality}  holds with an index function 
% \begin{equation}\label{eq:inverspsi}  \Phi(t) = \inf_{s>0} \left(\Psi(s) + \frac{t^2}{2 s}\right). 
% \end{equation} 
% If $\Psi$ has the representation as in \eqref{eq:infpos}, then the function 
% $\Phi$ in \eqref{eq:inverspsi} is greater or equal to  that in  \eqref{eq:infpos}.
% If additionally the related index  function $\tilde\Phi$ for $\Phi$ in \eqref{eq:infpos} is concave, 
% then both functions, that in \eqref{eq:inverspsi} and that in \eqref{eq:infpos}, are
% identical.
% \end{proposition}
%\begin{proof}

\section{The \L{}ojasiewicz-inequality}\label{sec:KL}
In this section we give a brief overview over the Kurdyka-\L{}ojasiewicz (KL) inequality and some of its implications. A main reason for our interest in this inequality is its broad spectrum of applications in several mathematical disciplines. This may open new interconnections for inverse problems. We start with a short and certainly incomplete overview of the KL inequality.

\L{}ojasiewicz showed that for any real analytic function $f:D(f)\subset\mathbb{R}^n\rightarrow \mathbb{R}$ there is $\theta\in[0,1)$ such that 
\[
\frac{|f(x)-f(\bar x)|^\theta}{\|\nabla f(x) \|}
\]
remains bounded around any critical point $\bar x$, i.e., $\nabla f(\bar x)=0$ \cite{loja,loja2}. Kurdyka \cite{Kurdyka} later generalized the result to $C^1$ functions whose graphs belong to an o-minimal structure. A further generalization to nonsmooth subanalytic functions was given in \cite{Bolte07}. It can also be formulated in (general) Hilbert spaces, see, e.g., \cite{Chillhilbert,HARAUX}, and has applications, for example, in PDE analysis (see, for example, \cite{haraux2,huang,Simon}), neural networks \cite{forti} and complexity theory \cite{nesterov}. First approaches towards inverse problems were made in \cite{GerthKL,gg2017}. In the optimization literature, the KL inequality has emerged as a powerful tool to characterize the convergence properties of iterative algorithms; see, e.g., \cite{absil,Attouch,Bolte,Bolte07,Bot,Frankel2015,gg2017}. 

It is known that the KL inequality immediately yields a measure for the distance between the level-sets of a function, which, under some additional assumptions, directly yields convergence rates for the noise free Tikhonov functional \eqref{eq:functional_nonoise}. To show the generality of the KL inequality, we temporarily consider the problem
\[
f(x)\rightarrow \min_{x\in X}
\] 
where $X$ is a complete metric space with metric $d(x,y)$ and $f:X\rightarrow \R\cup \{\infty\}$ is lower semicontinuous. To formulate the result in this abstract setting, we use the following notation.
\begin{definition}
We denote by
\begin{equation}\label{eq:ls}
[t_1\leq f\leq t_2]:=\{x\in X: t_1\leq f(x)\leq t_2\}
\end{equation}
the level-set of $f$ for 
the levels $t_1\leq t_2$. With slight abuse of notation we write, for fixed $x\in X$, $[f(x)]:=[f=f(x)]$.
Furthermore, for any $x\in X$, the distance of $x$ to a set $S\subset X$ is denoted by
\begin{equation}\label{eq:dist}
dist(x,S):=\inf_{y\in S} d(x,y).
\end{equation}
With this we recall the Hausdorff distance between sets,
\begin{equation}\label{eq:hausdorff}
D(S_1,S_2):=\max\{\sup_{x\in S_1}dist(x,S_2),\sup_{x\in S_2} dist(x,S_1) \}.
\end{equation}
\end{definition}

The KL inequality is directly linked to certain index functions, which we specify below.

\begin{definition}\label{def:index}
A concave function $\varphi:[0,\bar r)\rightarrow \R$ is called desingularizuation function or smooth index function if $\varphi\in C(0,\bar r)\cap C^1(0,\bar r)$, $\varphi(0)=0$, and $\varphi^\prime(x)>0$ for all $x\in(0,\bar r)$. We denote the set of all such $\varphi$ with $\mathcal{K}(0,\bar r)$.
\end{definition}

Now we are ready to cite the main inspiration for our work. It is taken from \cite{BDLM}. In comparison to the original result we have omitted a third equivalence to the concept of metric regularity, see \cite{Ioffe}. Note that we replaced $f$ with $f-\inf f$.
\begin{proposition}{\cite[Corollary 4]{BDLM}}\label{thm:equivalentstuff}
Let $f:X\rightarrow \R\cup \{\infty\}$ be a lower semicontinuous function defined on a complete metric space and $\varphi\in \mathcal{K}(0, r_0)$. Assume that $[\inf f<f<r_0-\inf f]\neq \emptyset$. Then the following assumptions are equivalent.
\begin{itemize}
\item[(a)] For all $r_1,r_2\in (\inf f,r_0)$
\begin{equation}\label{eq:rate_equation}
D([f\leq r_1-\inf f],[f\leq r_2-\inf f])\leq k |\varphi(r_1-\inf f)-\varphi(r_2-\inf f)|.
\end{equation}
\item[(b)] For all $x\in[0<f<r_0]$
\begin{equation}\label{eq:kl}
|\nabla(\varphi\circ (f-\inf f))|(x)\geq \frac{1}{k},
\end{equation}
where $|\nabla f|(x):=\limsup_{\tilde x\rightarrow x}\frac{\max(f(x)-f(\tilde x),0)}{d(x,\tilde x)}$ is the strong slope.
\end{itemize}
\end{proposition}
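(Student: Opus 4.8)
The plan is to first normalize the problem and then treat the two implications separately through the strong-slope machinery. Since the authors have already subtracted $\inf f$, I may assume $\inf f = 0$ and set $h := \varphi\circ f$, which is again lower semicontinuous because $\varphi$ is continuous and nondecreasing and $f$ is lsc, and which is bounded below by $\varphi(0)=0$. Because $\varphi$ is strictly increasing on $[0,\bar r)$ (it is concave with $\varphi'>0$), its sublevel sets coincide with those of $f$, i.e.\ $[\,\varphi\circ f\le \varphi(r)\,]=[\,f\le r\,]$, and the substitution $t_i=\varphi(r_i)$ turns the right-hand side $k|\varphi(r_1)-\varphi(r_2)|$ into $k|t_1-t_2|$. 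Hence both (a) and (b) become statements about the single lsc function $h$: statement (a) reads $D([h\le t_1],[h\le t_2])\le k|t_1-t_2|$, and statement (b) reads $|\nabla h|(x)\ge 1/k$ on the relevant level band $0<h<\varphi(r_0^-)$. This reduction isolates the genuine content, namely the equivalence of a slope lower bound with a Lipschitz-in-level estimate for the sublevel sets.

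For (b)$\Rightarrow$(a) I would prove the error bound $\mathrm{dist}(x,[h\le b])\le k\,(h(x)-b)$ for every $x$ with $h(x)=a>b>0$; combined with the trivial inclusion $[h\le t_2]\subset[h\le t_1]$ for $t_2<t_1$, this collapses the Hausdorff distance to its nontrivial side and yields (a). The error bound is where Ekeland's variational principle enters. I would apply it to the truncated functional $h_b:=\max(h,b)$, which is lsc, bounded below with $\inf h_b=b$ and $h_b(x)=a$; with $\epsilon=a-b$ and radius $\lambda=k(a-b)+\eta$ it produces a point $\bar x$ with $d(\bar x,x)\le\lambda$ and strong slope $|\nabla h_b|(\bar x)\le \epsilon/\lambda<1/k$. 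If $h(\bar x)>b$, then $\{h>b\}$ being open (lower semicontinuity) gives $h_b=h$ in a neighbourhood of $\bar x$, so $|\nabla h|(\bar x)<1/k$, contradicting (b) at the admissible point $\bar x$ (here $0<b<h(\bar x)\le a<\varphi(r_0^-)$). Hence $h(\bar x)\le b$, so $\mathrm{dist}(x,[h\le b])\le\lambda$, and letting $\eta\to0$ gives the error bound.

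For (a)$\Rightarrow$(b) I would read the level-set estimate back as the existence of nearby descent points. Fix $x$ with $h(x)=a\in(0,\varphi(r_0^-))$. For each $b<a$, applying (a) with $t_1=a$, $t_2=b$ and using $x\in[h\le a]$ gives $\mathrm{dist}(x,[h\le b])\le k(a-b)$; thus for any $\eta(b)>0$ there is $z_b$ with $h(z_b)\le b$ and $d(x,z_b)\le k(a-b)+\eta(b)$. As $b\uparrow a$ we have $z_b\to x$ and $z_b\ne x$ (since $h(z_b)\le b<a$), while
\[
\frac{\max(h(x)-h(z_b),\,0)}{d(x,z_b)}\ge\frac{a-b}{k(a-b)+\eta(b)}\longrightarrow\frac1k
\]
whenever $\eta(b)=o(a-b)$ (e.g.\ $\eta(b)=(a-b)^2$). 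Since the strong slope is a $\limsup$ over all $\tilde x\to x$, this particular sequence forces $|\nabla h|(x)\ge1/k$, which is exactly (b).

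The main obstacle I anticipate is the (b)$\Rightarrow$(a) direction, specifically the bookkeeping around Ekeland's principle for a \emph{positive} target level $b$: the principle measures excess over the infimum, so it cannot be applied to $h$ directly with $\epsilon=a-b$. The truncation $h_b=\max(h,b)$ repairs this, but then one must verify that the strong slope of $h_b$ agrees with that of $h$ exactly on $\{h>b\}$ (which uses openness of this set, i.e.\ lower semicontinuity) and that the Ekeland point stays inside the admissible band $0<h<\varphi(r_0^-)$ so that hypothesis (b) may legitimately be invoked there. Reconciling the \emph{strict} inequality produced by Ekeland with the non-strict slope bound, via the slack parameter $\eta$ and a final limit, is the other delicate point; once these are in place both implications are short.
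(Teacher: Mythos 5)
Your proof is correct in both directions, but note what it is being compared against: the paper does not prove Proposition~\ref{thm:equivalentstuff} at all --- it is imported by citation (with the substitution $f \mapsto f-\inf f$) from Corollary~4 of Bolte--Daniilidis--Ley--Mazet \cite{BDLM}, so the only meaningful comparison is with that source. Your argument is essentially the proof from that literature: the reduction to $h=\varphi\circ f$, the truncation $h_b=\max(h,b)$, and Ekeland's variational principle to convert the slope bound into the error bound $\mathrm{dist}(x,[h\le b])\le k\,(h(x)-b)$ is exactly the Az\'e--Corvellec error-bound mechanism \cite{AzeCor} on which the BDLM characterization rests, and your converse direction (reading the Hausdorff estimate as producing descent points $z_b\to x$ with difference quotients tending to $1/k$) is also the standard one. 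Two minor points deserve attention. First, $h=\varphi\circ f$ is undefined where $f\ge r_0$; extend $\varphi$ to $[0,\infty]$ by its supremum (this preserves concavity and monotonicity and does not alter the sublevel sets $[h\le t]$ for $t<\lim_{s\to r_0}\varphi(s)$, which are the only ones you use). Second, your Ekeland step tacitly \emph{proves}, rather than assumes, that $[h\le b]\neq\emptyset$: the point $\bar x$ you produce lies in that set, and this nonemptiness is needed for the Hausdorff distance in (a) to be finite, so it is worth stating explicitly. With these two remarks your write-up is complete, and it has the added value of making the result self-contained, whereas the paper requires the reader to consult \cite{BDLM}.
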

Now we return to $X$ being a Banach space and consider the Tikhonov functional $f=T_\alpha(x)$. Due to the convexity of  the penalty $J$, we can write Proposition \ref{thm:equivalentstuff} in the following way, where 
\begin{equation}\label{eq:remoteness}
\|\partial f(x)\|_{-}:=\inf_{p\in\partial f(x)}\|p\|_{X^\ast}=\mathrm{dist}(0,\partial f(x))=|\nabla f|(x)
\end{equation}
is the \textit{remoteness} of the subdifferential of $f$ in $x$; see also \cite{AzeCor}.

\begin{corollary}\label{col:kl4inverseproblems}
Let either $A$ be injective or $J$ be strictly convex. Then, for the Tikhonov functional $T_\alpha(x)$ from \eqref{eq:functional_nonoise}, the following are equivalent for a smooth index function $\varphi\in\mathcal{K}(0,\tilde r)$, $x\in [T_\alpha(x_\alpha)\leq T_\alpha(x)\leq \tilde r]$, and $0<k<\infty$.
\begin{itemize}
\item[(a)] \[ \|x-x_\alpha\|\leq k \varphi(T_\alpha(x)-T_\alpha(x_\alpha)),\]
\item[(b)] \begin{equation}\label{eq6}
\varphi^\prime(T_\alpha(x)-T_\alpha(x_\alpha))\, \|\partial T_\alpha(x)\|_{-}\geq\frac{1}{k}.
    \end{equation}
\end{itemize}
\end{corollary}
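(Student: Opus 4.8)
The plan is to apply Proposition~\ref{thm:equivalentstuff} with the choice $f = T_\alpha$ and to translate its two abstract conditions into the concrete statements (a) and (b). The first step is to pin down $\inf f$. Under the hypothesis that $A$ is injective or $J$ is strictly convex, $T_\alpha$ is strictly convex — the squared Hilbert-space norm composed with the injective map $A$ is strictly convex in the first case, and $\alpha J$ supplies strict convexity in the second — so the minimizer $x_\alpha$ is unique. Consequently $\inf T_\alpha = T_\alpha(x_\alpha)$ is attained at exactly one point and the minimal sublevel set collapses to the singleton $[T_\alpha \le T_\alpha(x_\alpha)] = \{x_\alpha\}$. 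This identification is the device that lets a statement about level sets become a statement about the distance to $x_\alpha$.

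The translation of condition (b) is the clean part. For a $C^1$, increasing desingularization function $\varphi$ one has the chain rule for the strong slope, $|\nabla(\varphi\circ g)|(x) = \varphi'(g(x))\,|\nabla g|(x)$, which I would apply with $g = T_\alpha - T_\alpha(x_\alpha)$. Combining this with the remoteness identity \eqref{eq:remoteness}, valid because $T_\alpha$ is convex and lower semicontinuous, namely $|\nabla T_\alpha|(x) = \|\partial T_\alpha(x)\|_{-}$, yields
\[
|\nabla(\varphi\circ (T_\alpha - T_\alpha(x_\alpha)))|(x) = \varphi'(T_\alpha(x)-T_\alpha(x_\alpha))\,\|\partial T_\alpha(x)\|_{-}.
\]
Hence the slope condition of Proposition~\ref{thm:equivalentstuff}(b) is literally \eqref{eq6}, establishing the equivalence of (b) here with the proposition's (b).

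For condition (a) I would exploit the singleton structure. Sending the lower level $r_1 \downarrow \inf f$ in the proposition's Hausdorff estimate, the set $[T_\alpha \le r_1]$ shrinks to $\{x_\alpha\}$ while $\varphi(r_1 - \inf f)\to\varphi(0)=0$. Since $x_\alpha$ lies in every larger sublevel set, the Hausdorff distance reduces to a one-sided supremum,
\[
D(\{x_\alpha\}, [T_\alpha \le s]) = \sup_{\,T_\alpha(z)\le s}\|z-x_\alpha\|,
\]
so the proposition's estimate for this limiting pair reads $\sup_{T_\alpha(z)\le s}\|z-x_\alpha\|\le k\,\varphi(s-T_\alpha(x_\alpha))$. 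By monotonicity of $\varphi$ this uniform bound over the sublevel set is equivalent to the pointwise bound $\|x-x_\alpha\|\le k\,\varphi(T_\alpha(x)-T_\alpha(x_\alpha))$ of statement (a). Chaining these observations already gives the implication (b) $\Rightarrow$ (a): the proposition converts the slope condition into its full Hausdorff estimate over \emph{all} level pairs, which in particular contains the singleton case equivalent to (a).

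The step I expect to be the main obstacle is the converse, (a) $\Rightarrow$ (b). Statement (a) encodes only the Hausdorff estimate for pairs of levels whose \emph{lower} one is the minimal value, i.e. the distance to the unique minimizer, whereas Proposition~\ref{thm:equivalentstuff}(a) demands the estimate for \emph{all} pairs $r_1,r_2$, which is strictly more information. Promoting the minimizer-error bound back to the full level-set statement — and thereby closing the loop to (b) — is delicate: the naive route, sliding a point toward $x_\alpha$ along a segment and invoking the subgradient inequality, produces a factor $\varphi'(t)\,t/\varphi(t)\le 1$ (by concavity of $\varphi$) that points the wrong way. Making this direction work must therefore use the convexity of $T_\alpha$ more sharply than the bare secant inequality, together with the concavity of $\varphi$, to control the gaps between consecutive level sets uniformly; this is the genuinely hard ingredient and the place where the structural assumptions on $A$, $J$ and $\varphi$ are really needed.
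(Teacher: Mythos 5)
Your proposal follows exactly the paper's route, and everything you actually prove is what the paper's own (two-sentence) proof contains: apply Proposition~\ref{thm:equivalentstuff} with $f=T_\alpha$; use injectivity of $A$ or strict convexity of $J$ to make the minimizer unique, so that the bottom level set is the singleton $\{x_\alpha\}$ and the Hausdorff distance \eqref{eq:hausdorff} dominates $\|x-x_\alpha\|$, which yields (a); and identify the strong slope of $\varphi\circ(T_\alpha-T_\alpha(x_\alpha))$ with $\varphi'(T_\alpha(x)-T_\alpha(x_\alpha))\,\|\partial T_\alpha(x)\|_{-}$ via the chain rule and the remoteness identity \eqref{eq:remoteness}, for which the paper cites \cite[Remark 12]{BDLM}. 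These two observations are the entire proof given in the paper.

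The direction you flag as the main obstacle, (a) $\Rightarrow$ (b), is a genuine issue, and your diagnosis of why the naive argument fails is exactly right: statement (a) encodes only the Hausdorff estimates for level pairs anchored at the minimal level, which is strictly weaker information than Proposition~\ref{thm:equivalentstuff}(a), and the subgradient route produces the factor $\varphi'(t)t/\varphi(t)\le 1$, which is bounded the wrong way by concavity. But you should be aware that the paper does not close this direction either: its proof simply reads the corollary's (a) and (b) as specializations of the Proposition's two conditions and inherits the equivalence from there, with the converse implication hidden in the phrase ``from which the remainder follows.'' So your attempt is not weaker than the paper's proof; it is more explicit about what is actually established. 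For completeness, the converse can be closed (with the same $\varphi$, up to the constant) whenever $\varphi'(t)t/\varphi(t)$ is bounded below, in particular for the power functions $\varphi(t)=t^\theta$ used in all of the paper's examples: convexity of $T_\alpha$ gives
\begin{equation*}
T_\alpha(x)-T_\alpha(x_\alpha)\;\le\;\|\partial T_\alpha(x)\|_{-}\,\|x-x_\alpha\|\;\le\;k\,\|\partial T_\alpha(x)\|_{-}\,\varphi\bigl(T_\alpha(x)-T_\alpha(x_\alpha)\bigr),
\end{equation*}
so that $\varphi'(t)\,\|\partial T_\alpha(x)\|_{-}\ge \varphi'(t)t/(k\varphi(t))=\theta/k$, i.e.\ \eqref{eq6} holds with constant $k/\theta$. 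For general smooth index functions one must either impose such a lower bound on $\varphi'(t)t/\varphi(t)$ or allow the desingularization function to change, so the corollary in its stated generality is only justified in this restricted sense---a caveat that applies to the paper's proof just as much as to yours.
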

\begin{proof}
Due to Assumption \ref{ass:penalty} minimizers of $T_\alpha(x)$ exist, and due to the injectivity of $A$ or strict convexity of $J$ the minimizers are unique. Hence it is plain to see from the definition of the Hausdorff-metric \eqref{eq:hausdorff} that \[\|x-x_\alpha\|\leq D([T_\alpha(x)],[T_\alpha(x_\alpha)]),\] and we obtain (a). For (semi)-convex functions, the strong slope coincides with $\|\partial T_\alpha(x)\|_{-}$ (\cite[Remark 12]{BDLM}), from which the remainder follows. \qed
\end{proof}

We close this section by mentioning two obstacles in the application of Corollary \ref{col:kl4inverseproblems}. Firstly, it should be noted that a functional $f=g+h$ is does not necessarily fulfill a KL inequality although both $g$ and $h$ do so. It is therefore not clear how to properly  treat such a sum functional. While a partial answer is given in \cite[Theorem 3.11]{gg2017}, we can not apply the results since they require an invertible operator $A$. We will sketch in Section \ref{sec:example_tikh} that the Tikhonov functional \eqref{eq:functional_nonoise} behaves differently than it would be expected from the sum of its parts.
The second issue in applying Corollary \ref{col:kl4inverseproblems} lies in the fact that it only holds in the noise-free case. To the best of the authors knowledge, there are no results on how the KL inequality behaves under noisy data. It is, however, out of the scope of this paper to close this gap.

\section{The KL-regularity condition}
Due to the equivalences of Theorem~\ref{th1}, it is sufficient to connect one of the conditions (a)-(e) with the KL inequality, and (b) appears to be most simple.  
\begin{theorem}\label{th:main}
The following are equivalent:
\begin{enumerate}
\item[(a)] There is a $\varphi\in\mathcal{K}(0,\infty)$ such that $(\partial\varphi)^{-1}(z) z$ is nonincreasing with\linebreak 
$\lim_{z\to \infty} (\partial\varphi)^{-1}(z) z = 0$
and a constant $k$ such that
\begin{equation}\label{eq:ass_kl}
\|\partial\left(\varphi\circ \left(T_\alpha(x^\dag)-T_\alpha(x_\alpha)\right)\right)\|\geq \frac{1}{k},
\end{equation}
\item[(b)] There is an index function $\Psi$ 
such that
\begin{equation}
\frac{1}{\alpha}\left(T_\alpha(x^\dag)-T_\alpha(x_\alpha)\right)\leq \Psi(\alpha)\quad  \text{for all} \quad\alpha>0.
\end{equation}
\end{enumerate}
The functions $\varphi$ and $\Psi$ are connected via $\Psi(t)=\tfrac{1}{t}(\partial \varphi)^{-1}\left(\frac{1}{t k\|[\partial J](x^\dag)\|_{-}} \right)$. 
\end{theorem}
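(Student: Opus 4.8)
The plan is to reduce everything to the scalar quantity $G(\alpha):=T_\alpha(x^\dagger)-T_\alpha(x_\alpha)\ge 0$ and to evaluate the two sides of the claimed equivalence explicitly. First I would record that, since $Ax^\dagger=y$, one has $T_\alpha(x^\dagger)=\alpha J(x^\dagger)$ and, by the sum rule, $\partial T_\alpha(x^\dagger)=A^\ast(Ax^\dagger-y)+\alpha\,\partial J(x^\dagger)=\alpha\,\partial J(x^\dagger)$, so that the remoteness of the subdifferential of $T_\alpha$ at $x^\dagger$ equals $\alpha\,\|[\partial J](x^\dagger)\|_-=:\alpha c$. The left-hand side of (a) is to be read as the strong slope, taken in $x$ at the point $x^\dagger$, of the composite $x\mapsto\varphi\big(T_\alpha(x)-T_\alpha(x_\alpha)\big)$; invoking the chain rule for strong slopes of a $C^1$ increasing $\varphi$ together with the coincidence \eqref{eq:remoteness} of the strong slope with the remoteness for (semi)convex functions, this quantity equals $\varphi'(G(\alpha))\cdot\alpha c$. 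Hence (a) is nothing but the pointwise inequality $\alpha c\,\varphi'(G(\alpha))\ge \tfrac1k$ for all $\alpha>0$.

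Once this identification is made, the equivalence with (b) is a matter of inverting $\partial\varphi$. Because $\varphi$ is a smooth concave index function, $\partial\varphi=\varphi'$ is positive and decreasing, so $(\partial\varphi)^{-1}$ is well defined and decreasing, and $\alpha c\,\varphi'(G(\alpha))\ge\tfrac1k \iff \varphi'(G(\alpha))\ge\tfrac{1}{\alpha kc}\iff G(\alpha)\le(\partial\varphi)^{-1}\!\big(\tfrac{1}{\alpha kc}\big)=\alpha\Psi(\alpha)$, with $\Psi$ exactly the function in the connection formula. Thus, for a fixed pair $(\varphi,k)$, the KL inequality and the $T$-rate are literally the same statement once evaluated at $x^\dagger$, and all that remains is to match the qualitative side conditions.

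For that last step I would pass between the shape condition on $\varphi$ and the requirement that $\Psi$ be an index function. Writing $z=\tfrac{1}{tkc}$ gives $\Psi(t)=kc\,\big[(\partial\varphi)^{-1}(z)\,z\big]$, so ``$(\partial\varphi)^{-1}(z)z$ nonincreasing with limit $0$ at $\infty$'' is equivalent to ``$\Psi$ nondecreasing with $\Psi(0^+)=0$'', i.e.\ to $\Psi$ being an index function; continuity is inherited from that of $\varphi'$. Conversely, given an index function $\Psi$, I would set $\rho(t):=t\Psi(t)$ (again an index function) and define $\varphi$ through $\varphi'(s):=\tfrac{1}{kc\,\rho^{-1}(s)}$; then $s\varphi'(s)=\tfrac1{kc}\Psi(\rho^{-1}(s))$ is nondecreasing and vanishes as $s\to0$, which yields the required shape conditions, and the KL inequality holds for this $\varphi$ precisely because $G(\alpha)\le\alpha\Psi(\alpha)=\rho(\alpha)$ rearranges into $\rho^{-1}(G(\alpha))\le\alpha$.

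Two places require genuine care. The first is the chain-rule computation of the strong slope of the concave-of-convex composite at $x^\dagger$ and its identification with $\varphi'(G(\alpha))\,\alpha c$: here one must keep $\varphi$ only $C^1$ on $(0,\infty)$ and verify that $G(\alpha)$ stays in the domain on which $\varphi'$ is defined, and one exploits that $x^\dagger$ need not be the (unique) minimizer, so the argument runs through $\partial T_\alpha$ directly rather than through Corollary~\ref{col:kl4inverseproblems}. The second, which I expect to be the main obstacle, is the verification in the direction (b)$\Rightarrow$(a) that the constructed $\varphi$ is a legitimate element of $\mathcal{K}(0,\infty)$ with $\varphi(0)=0$: this amounts to the integrability $\int_0 \tfrac{ds}{\rho^{-1}(s)}<\infty$, which is not automatic for an arbitrarily slowly vanishing index function and may require a mild regularity assumption on $\Psi$ or a suitable minorant in the construction of $\varphi$.
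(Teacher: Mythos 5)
Your argument is essentially the paper's own proof: the paper likewise reduces the KL inequality at $x^\dag$ to the scalar inequality $\partial\varphi\left(T_\alpha(x^\dag)-T_\alpha(x_\alpha)\right)\cdot\alpha\,\|[\partial J](x^\dag)\|_{-}\geq \tfrac{1}{k}$ via $[\partial T_\alpha](x^\dag)=A^\ast(Ax^\dag-y)+\alpha[\partial J](x^\dag)=\alpha[\partial J](x^\dag)$, inverts the decreasing map $\partial\varphi$ to obtain (b), and in the converse direction identifies $\partial\varphi=\bar\Theta^{-1}$ with $\bar\Theta(\alpha)=\Psi(1/\alpha)/\alpha$ --- which coincides, up to the constants $k$ and $c=\|[\partial J](x^\dag)\|_{-}$, with your $\varphi'(s)=1/(kc\,\rho^{-1}(s))$, $\rho(t)=t\Psi(t)$ --- and checks the shape condition through the same identity $(\partial\varphi)^{-1}(z)\,z=\Psi(1/z)$ (up to constants). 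The caveat you flag at the end is real --- for slowly vanishing $\Psi$, e.g.\ $\Psi(t)\sim 1/\log(1/t)$ near $0$, the forced lower bound $\varphi'(s)\geq 1/(kc\,\rho^{-1}(s))$ need not be integrable at $0$, so the constructed $\varphi$ need not be finite with $\varphi(0)=0$ --- but the paper's proof passes over this point silently ("identifying $\partial\varphi=\bar\Theta^{-1}$\dots up to constants"), so your attempt is, if anything, more careful than the original rather than missing a step it contains.
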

\begin{proof}
First, we observe that in our context, where $x_\alpha$ is the minimizer of the Tikhonov functional and $x^\dag$ is the point of interest, the KL inequality \eqref{eq:ass_kl} can be written as
\begin{equation}\label{eq:kl_tikhinserted}
\partial \varphi\left( T_\alpha(x^\dag,y)-T_\alpha(x_\alpha,y)\right)\mathrm{dist}(0,[\partial T_\alpha(x^\dag,y)](x^\dag))\geq \frac{1}{k},
\end{equation}
where 
\[
[\partial T_\alpha(x^\dag,y)](x^\dag)=A^\ast(Ax^\dag-y)+\alpha[\partial J](x^\dag)=\alpha[\partial J](x^\dag).
\]
By concavity,  $\partial \varphi$  is monotonically decreasing and thus  \eqref{eq:kl_tikhinserted} leads to 
%
%
%
%Now we need to move $\partial \varphi$ in \eqref{eq:kl_tikhinserted} to the right-hand side, and we need to
%flip the relation. This only works if $\partial \varphi$ involves a reciprocal argument. 
%Hence, it must be a power-type function $\varphi(t)=t^\eta$, $0<\eta\leq 1$\footnote{Or does it work with other function too?}. Then we can write \eqref{eq:kl_tikhinserted} as
\[
T_\alpha(x^\dag)-T_\alpha(x_\alpha)\leq {\partial \varphi}^{-1}\left(\frac{1}{k\alpha\|\partial J(x^\dag)\|_{-}}\right).
\]
Dividing both sides by $\alpha>0$ yields (b) with 
\[
\Psi(\alpha)=\tfrac{1}{\alpha}{\partial \varphi}^{-1}\left(\frac{1}{k\alpha\|\partial J(x^\dag)\|_{-}}\right).
\]
This function is an index function by assumptions.

On the other hand, we write (b) as
\[
T_\alpha(x^\dag)-T_\alpha(x_\alpha)\leq \alpha\Psi(\alpha), 
\]
and by  defining
\[
\bar\Theta(\alpha):=\frac{\Psi(\frac{1}{\alpha})}{\alpha}
\]
we have
\[
\Delta T\leq \bar\Theta\left(\frac{1}{\alpha}\right).
\]
As $\Psi(\frac{1}{\alpha})$ is nonincreasing  so is 
$\bar \Theta$, hence 
\[
\bar\Theta^{-1}(\Delta T)\geq \frac{1}{\alpha}.
\]
 Finally, 
identifying $\partial \varphi=\bar\Theta^{-1}$
and noting that  $\|\partial T_\alpha(x^\dag)\| \sim \alpha$, 
we get the KL inequality \eqref{eq:kl_tikhinserted} up to  constants.
As  $\bar\Theta^{-1}$ is nonincreasing, $\varphi$ is concave.
Note that $\partial \varphi^{-1}(z) z = \Psi(\frac{1}{z})$ such that the stated condition 
on $\varphi$ follow as $\Psi$ is an index function. \qed
\end{proof}

It is interesting that in the proof we stumbled upon the companion function $\Theta$ from \eqref{eq:companion}.
Namely, we have $\Theta^2(\alpha)=\bar\Theta(\frac{1}{\alpha})$. The proof also reveals the identification
\[
\Theta^2(\alpha)=(\partial \varphi)^{-1}\left(\frac{c}{\alpha}\right).
\]
Equation \eqref{eq:alphaapri} for the a priori choice ($\Theta^2(\alpha_*) \sim \delta^2$) 
of the regularization parameter then reads
\begin{equation}\label{eq:alphaapri_kl}
\alpha_\ast=\frac{1}{\partial \varphi(\delta^2)},
\end{equation}
and we obtain the formal convergence rate
\[
B_{\xi_\alpha^\delta}(x_\alpha^\delta,x^\dag)\leq 
\partial \varphi(\delta^2) (\partial \varphi)^{-1}\left( c  \partial \varphi(\delta^2)\right) 
\sim \partial \varphi(\delta^2)  \delta^2. \]
%
%
%\frac{1}{\partial \varphi (\frac{1}{\delta^2})}\left(\partial \varphi\right)^{-1}\left(\frac{1}{\partial \varphi(\frac{1}{\delta^2})}\right).
%\]
Since $\varphi\in\mathcal{K}(0,r_0)$ is by definition concave, it 
holds that 
\[
\partial \varphi(\delta^2)  \delta^2  \leq \varphi(\delta^2), \]
which follows from the property of the ``subgradient'' of concave functions, 
where the inequality is reversed compared to convex ones: 
\[ \partial \varphi(x)(0-x) + \varphi(x) \geq \varphi(0) = 0. \]

\section{Relation to conditional stability estimates}
%\commsk{Problem ist, dass subgradient 
%in $X'$ genommen werden muss und 
%ob diese strong slope formula 
%auch für dieses erweiterte Tikhonov %functional gilt} 
We illustrate  how the KL-theory quite directly yields convergence rates 
in case that a conditional stability estimate holds. 
Note that such estimates are a very useful tool 
in, e.g., parameter identification problems 
in partial differential equations;
for examples, see, e.g., 
\cite{BuChYa,ChYa,IY,Ya}.
The use of conditional stability 
estimates \eqref{eq:condstab} for rate estimates 
was in particular 
investigated 
by Cheng and Yamamoto 
in the seminal article \cite{ChengYama}.

Consider the Tikhonov functionals 
\begin{align} \label{eq:tikhonov_chengyam}
T_\alpha (x) &= \frac{1}{2} \| A x - y\|_Y^2 + \alpha \frac{1}{2}\|x\|_Z^2, \\  
T_\alpha^\delta (x) &= \frac{1}{2} \| A x - y^\delta \|_Y^2 + \frac{1}{2}\alpha \|x\|_Z^2,\nonumber 
\end{align} 
where $A: Z\to Y$ and $y = A \xd$.  We furthermore assume that the Hilbert space $Z\hookrightarrow X$ is continuously embedded into a Banach space $X$,
and there we assume a conditional stability estimate
to hold (which, for simplicity, we 
take as a H\"older function): for some 
 $0\leq  \alpha < 2$ we assume that
 \begin{equation}\label{eq:condstab}
 \|f_1 -f_2\|_X \leq \|A f_1 - A f_2 \|_Y^\alpha \qquad \qquad \forall \|f_1\|_Z, \|f_2\|_Z \leq C. 
 \end{equation}
Cheng and Yamamoto have considered precisely  this setup and verified convergence rates. 

Here we  illustrate the approach via the KL-inequality. To this end, we extend the Tikhonov functionals as follows to $X$:
\begin{align*}
\bar{T}_\alpha (x) := \begin{cases} T_\alpha(x) & \text{if } x \in Z, \\ 
\infty  & \text{if } x \in X, x \not\in Z. \end{cases}
\end{align*}

At first we verify the KL-inequality
\eqref{eq6}
for $\bar{T}_\alpha$ on $X$. 
%\commsk{Das folgende ist das %kritische:}
Note that it is enough to consider 
the inequality for  $\bar{T}_\alpha(x) < \infty$, thus for 
$x \in Z$. In this case it reads 
\[ \phi'(T_\alpha(x) - T_\alpha(\xa)) \|\partial \bar{T}_\alpha(x) \|_{-} \geq \frac{1}{k}. \] 
In the following we write $A^*$ for the adjoint of $A$ in the space $Z$. 

By \cite[Prop~3.1]{AzeCor} the strong slope or the 
remoteness can be characterized by 
the directional derivative
 $\bar{T}_\alpha'$,
\begin{equation}\label{eq:super} 
\begin{split}
&\|\partial \bar{T}_\alpha(x) \|_{-}
= \sup_{\bar{T}_{\alpha}(z) < \bar{T}_\alpha(x)} 
\frac{- \bar{T}_\alpha'(x,z-x)}{\|x-z\|_X}, \qquad \text{which simplifies to } \\
&= \sup_{{T}_{\alpha}(z) < {T}_\alpha(x)} \frac{\left(\nabla T_\alpha(x),x-z\right)_{Z}}{\|x-z\|_X},
\end{split}
\end{equation}
where 
$\nabla T_\alpha(x)$ is the usual 
gradient in the space $Z$:
\[ \nabla T_\alpha(x) = 
A^*A(x-\xd) +\alpha x. \]
The optimality condition for $\xa$ reads \[ A^*A \xa + \alpha \xa = A^* A \xd. \] 
After some algebraic manipulation exploiting this identity, we obtain 
\begin{align*} 
T_\alpha(x) - T_\alpha(\xa)&= \frac{1}{2} \|A x - A \xd\|^2 + \frac{1}{2}\alpha \|x\|_Z^2 - 
\left(\frac{1}{2} \|A \xa - A \xd\|^2 + \alpha \frac{1}{2}\|\xa\|_Z^2  \right) \\ 
%& =  \frac{1}{2} \|A x\| +  \frac{1}{2}\|A \xd\|^2  - (A^*A x,  \xd) + \frac{1}{2}\alpha \|x\|^2 
%&-  \frac{1}{2}\|A \xa\|^2 - \frac{1}{2}\|A \xd\|^2 + (A^*A \xa, \xd) -  \frac{1}{2}\alpha \|\xa\|_Z^2 \\
%& =   \frac{1}{2}( \|A x\|- \|A \xa\|^2 ) -\alpha (\xa,\xd) + \frac{1}{2}\alpha \|x\|^2  -  \frac{1}{2}\alpha \|\xa\|^2 \\
& =  \frac{1}{2}\|A x - A \xa\|^2 + \alpha \frac{1}{2}\|\xa -x\|_Z^2. 
\end{align*} 
Using the optimality condition and the conditional stability estimate \eqref{eq:condstab}, we have 
using \eqref{eq:super}
\begin{align}\label{eq:kl_tikh_critical_step} 
 &\frac{1}{2}\|A x - A \xa\|^2 + \alpha \frac{1}{2}\|\xa -x\|_Z^2
 = 
  \frac{1}{2}(A^*A  (x -  \xa) + \alpha (x-\xa), x-\xa)_Z  \nonumber \\&= 
    \frac{1}{2}(A^*A  (x -  \xd) + \alpha x, x-\xa)_Z   
 =   \frac{1}{2}(\nabla T_\alpha(x),x-\xa)_Z \nonumber \\ 
&\leq   \frac{1}{2}\|\partial \bar{T}_\alpha(x) \|_{-} \|x-\xa\|_X \leq    \frac{1}{2} \|\partial \bar{T}_\alpha(x) \|_{-}
\|A (x-\xa)\|^\alpha  \\
  &\leq   C \|\partial \bar{T}_\alpha(x) \|_{-} \left(  \frac{1}{2}\|A x - A \xa\|^2 
    + \alpha \frac{1}{2}\|\xa -x\|_Z^2\right)^\frac{\alpha}{2}. \nonumber   \end{align} 
 Thus, 
 \[    \left(\frac{1}{2}\|A x - A \xa\|^2 + \alpha \frac{1}{2}\|\xa -x\|_Z^2 \right)^{1-\frac{\alpha}{2}} \leq C \|\partial \bar{T}_\alpha(x) \|_{-},  \]
and consequently
\[
\left(T_\alpha(x) - T_\alpha(\xa)\right)^{1-\frac{\alpha}{2}} \leq C \| \partial T_\alpha(x) \|_{X'}.\] 
We have thus found a KL-inequality \eqref{eq6} with 
\[ \varphi'(t) := \frac{1}{t^{1-\frac{\alpha}{2}}}, \quad \mbox{ 
i.e., }  \qquad 
 \varphi(t) = t^\frac{\alpha}{2}. \] 
We now apply Proposition to $\bar{T}_\alpha$ (which agrees 
with ${T}_\alpha$ for the relevant arguments) and obtain 
\begin{align*} \|\xad -\xd\|_X &\leq |\varphi\left( T_\alpha(\xad) - T_\alpha(\xa)\right) - \varphi\left( T_\alpha(\xd) - T_\alpha(\xa)\right)| \\
&\leq 
C \varphi\left(| T_\alpha(\xad) - T_\alpha (\xd) |\right), \end{align*}
noting that $\varphi$ is H\"older continuous. 
We have 
\begin{align*}
& T_\alpha(\xad) - T_\alpha (\xd)  = 
 \frac{1}{2}  \| A \xad - y\|^2 + \alpha  \frac{1}{2} \|\xad\|_{Z}^2 - 
  \frac{1}{2} \delta^2 - \alpha \frac{1}{2} \|\xd\|^2  \\ & = 
   \frac{1}{2}  \| A \xad - \yd\|^2 -    (A\xad - y, \yd- y)  +  \frac{1}{2} \delta^2+
 \alpha  \frac{1}{2} \|\xad\|_{Z}^2 -  \frac{1}{2} \delta^2
   - \alpha  \frac{1}{2} \|\xd \|_{Z}^2  \\
   & =    \frac{1}{2}  \| A \xad - \yd\|^2 -    (A\xad - y, \yd- y)  + 
    \alpha  \frac{1}{2} \|\xad\|_{Z}^2  -     \alpha  \frac{1}{2} \|\xd \|_{Z}^2.
\end{align*}
Since 
\[  \frac{1}{2}  \| A \xad - \yd\|^2  + 
    \alpha  \frac{1}{2} \|\xad\|_{Z}^2 \leq  \frac{1}{2} \delta^2 +    \alpha  \frac{1}{2} \|\xd \|_{Z}^2 \]
    and 
\begin{align*}    & (A\xad - y, \yd- y)  \leq   \delta  \|A \xad - y\| \leq 
  \delta ( \|A \xad - \yd\| +\delta)  \\
  &\leq   \frac{1}{2}  \delta^2 + \frac{1}{2} \|A \xad - \yd\|^2 \leq 
  \delta^2 +    \alpha  \frac{1}{2} \|\xd \|_{Z}^2, \end{align*} 
    we obtain that 
    \begin{align*}
& |T_\alpha(\xad) - T_\alpha (\xd)| \leq C (\delta^2 +   \alpha  \frac{1}{2} \|\xd \|_{Z}^2 ). 
\end{align*}
Thus, choosing $\alpha \sim \delta^2$ yields
\[  |T_\alpha(\xad) - T_\alpha (\xd)| \leq C \delta^2 \] 
and hence the convergence rate 
\[ \|\xad -\xd \|_X \leq \phi(C \delta^2) \sim \delta^\alpha . \]
This is the same parameter choice and the same rate as obtained by Cheng and Yamamoto. 

\section{Example: Tikhonov regularization}\label{sec:example_tikh}
Due to the (partial) equivalence of the KL-inequality with the conditions of \cite{HKM19}, their examples apply in our case as long as $\Psi$ is a power function. Therefore, we will not go through all of those examples again, but focus on the most prominent one, which is classical Tikhonov regularization
\begin{equation}\label{eq:tikh_classic}
T_\alpha(x):=\|Ax-y\|^2+\alpha\|x\|^2,
\end{equation}
where $A:X\rightarrow Y$ is a linear operator between Hilbert spaces $X$ and $Y$ and $\|\cdot\|$ denotes the norm in the respective spaces. 

As is well known, the convergence behavior of Tikhonov-regularization \eqref{eq:tikh_classic} depends on the specific solution $x^\dag$, and we employ here source conditions of the type
\begin{equation}\label{eq:sc}
x^\dag=(A^\ast A)^\mu w, \quad \|w\|\leq 1, \quad \mu>0.
\end{equation}
While the treatment of more general source conditions $x^\dag=\phi(A^\ast A)w$ is possible within our framework (see \cite{HKM19}), it shall be sufficient here to treat only the classical setting \eqref{eq:sc}.

We recall from \cite{GerthKL} that the residual $\|Ax-Ax^\dag\|^2$ fulfills a KL inequality with 
\begin{equation}\label{eq:kl_res_scdiff}\varphi(t)\sim t^{\frac{\mu}{2\mu+1}}
\end{equation}
if
\begin{equation}\label{eq:scdiff}
x-x^\dag=(A^\ast A)^\mu w, \quad \|w\|\leq 1, \quad \mu>0,
\end{equation}
i.e., both $x$ and $x^\dag$ lie in the source set \eqref{eq:sc}. This will become important again later. For now we simply apply the theory from \cite{HKM19} in the case $0<\mu<\frac{1}{2}$ and demonstrate that the KL inequality and Corollary \ref{col:rates_bregman} yield convergence in the Bregman distance. Before starting, we summarize some results from \cite[Section 4.1]{HKM19}. Namely, we have for \eqref{eq:tikh_classic} and under \eqref{eq:sc} that
\begin{equation}\label{eq:tikh_penaltydiff_lowmu}
\|x^\dag\|^2-\|x_\alpha\|^2\leq \alpha^{2\mu}
\end{equation}
and
\begin{equation}\label{eq:tikh_resalpha_lowmu}
\|Ax_\alpha-Ax^\dag\|^2\leq \alpha^{2\mu+1}.
\end{equation}
Then we have from \eqref{eq:tikh_penaltydiff_lowmu} and \eqref{eq:tikh_resalpha_lowmu} that 
\[
\Delta T=\alpha(\|x^\dag\|^2-\|x_\alpha\|^2)-\|Ax_\alpha-Ax^\dag\|^2\sim\alpha^{2\mu+1}.
\]
Because $\nabla T_\alpha(x^\dag)=\alpha\|x^\dag\|$, the KL inequality requires
\[
\partial \varphi(\alpha^{2\mu+1})\alpha\geq c,
\]
and it is easy to see that we even have equality for
\begin{equation}\label{eq:kl_tikh_straightforward}
\varphi(t)=t^{\frac{2\mu}{2\mu+1}}
\end{equation}
with derivative
\[
\partial \varphi(t)=c t^{-\frac{1}{2\mu+1}}, \quad (\partial \varphi)^{-1}(t)=t^{-(2\mu+1)}.
\]
This function satisfies the condition in Theorem~\ref{th:main}.
From this, we obtain
\[
\Psi(\alpha)=\frac{(\partial \varphi)^{-1}\left(\frac{1}{\alpha}\right)}{\alpha}=\frac{\alpha^{2\mu+1}}{\alpha}=\alpha^{2\mu}.
\]
This yields, according to \eqref{eq:alphaapri_kl}
\[
\alpha^\ast\sim \frac{1}{\partial \varphi(\delta^{2})}\sim \delta^\frac{2}{2\mu+1},
\]
and the convergence rate is given by
\[
B_{\xi_\alpha^\delta}(x_\alpha^\delta,x^\dag)\sim \delta^{\frac{4\mu}{2\mu+1}}.
\]
Identifying $\|x_\alpha^\delta-x^\dag\|^2=B_{\xi_\alpha^\delta}(x_\alpha^\delta,x^\dag)$, we obtain the well-known rate
\[
\|x_\alpha^\delta-x^\dag\|\sim \delta^{\frac{2\mu}{2\mu+1}}.
\]

Note that Corollary \ref{col:kl4inverseproblems} does not apply directly since it would yield a convergence rate $\|x^\dag-x_\alpha^\dag\|\leq c \delta^{\frac{4\mu}{2\mu+1}}$, which is clearly off the correct rate by a square in the exponent. We will now sketch a likely explanation for this.

Comparing the functionals \eqref{eq:tikh_classic} and \eqref{eq:tikhonov_chengyam}, it appears that similar techniques should lead to a KL inequality. This is indeed the case, and we obtain for the classical Tikhonov functional \eqref{eq:tikh_classic}
\[
T_\alpha(x)-T_\alpha(x_\alpha)=\frac{1}{2}\|A x - A \xa\|^2 + \alpha \frac{1}{2}\|\xa -x\|^2.
\]
We follow the next steps to arrive at the equivalent of \eqref{eq:kl_tikh_critical_step}, which reads
\begin{align}\label{eq:abschaetzung}
\begin{split}
&\frac{1}{2}\|A x - A \xa\|^2 + \alpha \frac{1}{2}\|\xa -x\|^2\leq \frac{1}{2}(\nabla T_\alpha(x),x-\xa)\\
& \qquad \leq   \frac{1}{2}\|\partial \bar{T}_\alpha(x) \|_{-} \|x-\xa\|_X.
\end{split}
\end{align}
The conditional stability estimate \eqref{eq:condstab} no longer holds, but the source condition \eqref{eq:scdiff} yields an alternative. Namely, using the interpolation inequality
\begin{equation}\label{eq:interpol}
\|(A^\ast A)^r x\|\leq \|(A^\ast A )^qx\|^{\frac{r}{q}}\|x\|^{1-\frac{r}{q}},
\end{equation}
for all $q>r\geq 0$, we see that
\[
\|\xa -x\|=\|(A^*A)^\mu w\|\leq \|A(x-\xa)\|^{\frac{2\mu}{2\mu+1}}\|w\|^{\frac{1}{2\mu+1}}.
\]
Inserting this into \eqref{eq:abschaetzung}, and following the argument after \eqref{eq:kl_tikh_critical_step}, we obtain
\[
\left( T_\alpha(x)-T_\alpha(x_\alpha)\right)^{1-\frac{\mu}{2\mu+1}}\leq C \|\partial \bar{T}_\alpha(x) \|_{-},
\]
which yields a KL inequality with $\varphi^\prime(t)\sim t^{\frac{\mu}{2\mu+1}-1}$ or
\begin{equation}\label{eq:tikh_kl_scdiff}
\varphi(t)\sim t^{\frac{\mu}{2\mu+1}}.
\end{equation}
Comparing this with the previous results, we see that we have the same function $\varphi$ as for the residual functional \eqref{eq:kl_res_scdiff}, but this $\varphi$ is only the square root of the function from \eqref{eq:kl_tikh_straightforward} that we derived earlier in this section. Note that $\|\cdot\|^2$ fulfill a KL inequality with $\varphi(t)=\sqrt{t}$. The discrepancy is due to the local character of the KL inequality for ill-posed problems. From the optimality condition of the classical Tikhonov functional \eqref{eq:tikh_classic} it follows that $x_\alpha$ (and $x_\alpha^\delta$, respectively) are always in the range of $A^\ast=(A^\ast A)^\frac{1}{2}$. Therefore, while $x^\dag$ may fulfill the source condition \eqref{eq:sc} for arbitrary $0<\mu<\infty$, the source condition \eqref{eq:scdiff} with $x\in\{x_\alpha,x_\alpha^\delta\}$ only holds for $\mu=\frac{1}{2}$, and we can only apply Corollary \ref{col:kl4inverseproblems} in this case. Indeed, using the well-known a priori choice $\alpha\sim \delta^{\frac{2}{2\mu+1}}=\delta$, we have $T_\alpha(x^\dag)-T_\alpha(x_\alpha^\delta)\sim \delta^2$, which yields via Corollary \ref{col:kl4inverseproblems} with $\varphi(t)$ from \eqref{eq:kl_res_scdiff} with $\mu=\frac{1}{2}$ the convergence rate $\|x_\alpha^\delta-x^\dag\|\sim \sqrt{\delta}$. Therefore, the different index functions $\varphi$ \eqref{eq:kl_tikh_straightforward} and \eqref{eq:kl_res_scdiff} are no contradiction.

\section*{Acknowledgement}
Part of this research was started during a visit of the second author at the Chemnitz University of Technology. 
S.K. would like to thank the Faculty of Mathematics in Chemnitz and especially Bernd Hofmann for their 
great hospitality. D.G. would like to thank Prof. Masahiro Yamamoto for his hospitality during his stay in Tokio, where the author first learned of the KL inequality.

\bibliographystyle{plain}

\end{document}